\newtheorem{thm}{Theorem}
\newtheorem{rmk}{Remark}
\newtheorem{theorem}{Theorem}[section]
\newtheorem{lemma}[theorem]{Lemma}
\newtheorem{corollary}[theorem]{Corollary}
\newtheorem{definition}[theorem]{Definition}
\theoremstyle{definition}
\newtheorem{remark}[theorem]{Remark}
\numberwithin{equation}{section}
\begin{document}
\setcounter{page}{1}

\title[Quasi-Einstein metrics and a curvature identity associated with the Ricci flow]{Quasi-Einstein metrics and a curvature identity associated with the Ricci flow}

\author[Atreyee Bhattacharya and Sayoojya Prakash]{Atreyee Bhattacharya and Sayoojya Prakash}
\begin{abstract}
Quasi-Einstein manifolds are well-studied generalizations of Einstein manifolds. This includes gradient Ricci solitons and has a natural correspondence with the warped product Einstein manifolds. A quasi-Einstein metric is said to be rigid when it reduces to an Einstein metric. On a different note, Einstein metrics can be viewed as fixed points of the Ricci flow up to homothety. While gradient Ricci solitons are generalized fixed points of the Ricci flow, not much is known, in general, about the evolution of quasi-Einstein metrics under the Ricci flow. In this paper, we employ an identity associated to the evolution of curvature along the Ricci flow, to conclude the rigidity of certain closed quasi-Einstein manifolds.   
\end{abstract}
\maketitle
\section{Introduction} 

The Ricci flow, a geometric evolution equation of Riemannian manifolds, analogous to the heat equation, was introduced by R. Hamilton (\cite{TRH}). Given a Riemannian manifold $(M,g_{0})$, the Ricci flow (see \cite{BCI} for details) starting at $g_{0}$ is the smooth one-parameter family of metrics $\{g(t)\}_{t\in[0,T]}$ on $M$, satisfying the following partial differential equation:
\begin{equation}\label{RFE}
\frac{\partial g(t)}{\partial t} = -2Ric_{g(t)}, \ \ g(0)=g_0
\end{equation}
where for each $t\in[0,T]$, $Ric_{g(t)}$ denotes the Ricci tensor of the metric $g(t)$. A Riemannian metric is said to be \textit{Einstein} if its Ricci tensor is a constant multiple of the metric. It follows that under the Ricci flow, Einstein metrics remain unaltered up to homothety: In fact, if $(M,g)$ is an Einstein manifold with $Ric_g =\lambda g$, the solution to \cref{RFE} with initial condition $g(0) = g$ is given by $g(t) = (1-2\lambda t)g$. It turns out that all fixed points of the Ricci flow up to homothety, are Einstein. More generally, Ricci solitons, popularly known as the \textit{self-similar solutions} or \textit{generalized fixed points} of the Ricci flow that generalize Einstein manifolds, have been extensively studied both in theoretical physics (see \cite{RSTP}) and differential geometry and remain unaltered by the Ricci flow on the moduli space of Riemannian metrics (see \cite{BCRF}, for details). G. Perelman  (\cite{PRF}) showed that all closed Ricci solitons belong to a special class of Riemannian manifolds, called \textit{quasi-Einstein}, another well-known generalization of Einstein manifolds. More precisely, a Riemannian manifold $(M,g)$ is called quasi-Einstein, if there exists a function $f \in C^{\infty}(M)$ such that 
$$Ric_{g} + Hess_{g}(f) - \frac{1}{m}df\otimes df = \lambda g,$$ for some  $\lambda \in \mathbb{R}$ and $0 < m \leq \infty$. It can be checked that a quasi-Einstein manifold $(M,g,f,\lambda)$ coincides with a Ricci soliton when $m = \infty$ and it corresponds to a warped product Einstein metric when $m \in \mathbb{N}$ (see \cite{DYKWP}). Unlike the evolution of Ricci solitons under the Ricci flow, the evolution of arbitrary quasi-Einstein manifolds under the Ricci flow, is not so well understood. In this paper, we study a curvature identity derived from the Ricci flow and prove that a closed quasi-Einstein manifold satisfying the identity does not get perturbed by a Ricci flow and hence, reduces to an Einstein manifold. 

A quasi-Einstein metric (or a Ricci soliton) is called \textit{rigid} if it reduces to an Einstein metric. Note that in the non-compact set-up, Ricci solitons are not necessarily quasi-Einstein (\cite{NCRS}). Rigidity of quasi-Einstein manifolds and Ricci solitons (not necessarily closed) have been important topics in Riemannian geometry for the past three decades. R.~Hamilton (\cite{RHS}) established that two-dimensional closed Ricci solitons are rigid. T.~Ivey proved a similar rigidity result for all closed Ricci solitons in three dimension.  More generally, he proved the rigidity of closed Ricci solitons with $\lambda \leq 0$ in all dimensions (\cite{RSTV}). Later, J.~Case, Y.~Shu, and G.~Wei proved that two-dimensional closed quasi-Einstein manifolds with finite $m$ are rigid, thereby settling the rigidity problem for all closed quasi-Einstein manifolds in dimension two. They further showed that closed K\"ahler quasi-Einstein manifolds with finite $m$ are rigid in all dimensions (\cite{RQE}). Moreover, D.S.~Kim and Y.H.~Kim (\cite{KKWP}) proved the rigidity of arbitrary closed quasi-Einstein manifolds with finite $m$ and $\lambda \leq 0$. P.~ Petersen and W.~Wylie established various rigidity results for Ricci solitons in a series of papers. In particular, they showed that a closed \textit{shrinking} gradient Ricci soliton is rigid if $\int_{M}Ric(\nabla f, \nabla f) dV_{g} \leq 0$(\cite{PWRGRS}). They also proved the rigidity of homogeneous gradient Ricci solitons and ruled out the existence of non-compact cohomogeneity one shrinking gradient Ricci solitons with non-negative curvature (\cite{PWGRSS}). Furthermore, they classified all simply connected shrinking Ricci solitons with vanishing Weyl tensor and Ricci tensor satisfying a weak integral condition up to isometry, as one of $\mathbb{S}^{n}$, $\mathbb{S}^{n-1} \times \mathbb{R}$, and $\mathbb{R}^{n}$ (\cite{CGSPW}). In dimension $4$, A.~Naber classified all non-compact shrinking solitons with bounded nonnegative curvature operator to be isometric to one of $\mathbb{R}^{4}$, finite quotients of $\mathbb{S}^{2} \times \mathbb{R}^{2}$, and $\mathbb{S}^{3} \times \mathbb{R}$(\cite{RSAN}). G.~Catino, C.~Mantegazza, L.~Mazzieri, and M.~Rimoldi (\cite{LCFQEM}) proved that complete locally conformally flat quasi-Einstein manifolds of dimension $n \geq 3$ are locally a warped product of $(n-1)$- dimensional fibres of constant curvature. However, the rigidity of a closed quasi-Einstein metric with $\lambda>0$ is yet to be fully resolved. In \cite{ABSP}, the authors obtained rigidity criteria for certain closed quasi-Einstein manifolds with $\lambda >0$ using techniques of conformal submersion. While several techniques of geometric analysis have been employed to examine the rigidity of both quasi-Einstein metrics and Ricci solitons so far, apart from \cite{RHS} and \cite{RSTV}, the authors are unaware of any other work involving the Ricci flow methods to investigate the same. In this paper, we use a curvature identity associated to the Ricci flow to explore this rigidity further.

Recall that under the Ricci flow \ref{RFE}, the Riemann curvature operator $\mathcal{R}=$ $\mathcal{R}(t): \wedge^2 T M \rightarrow \wedge^2 T M$ of $g(t)$ evolves by
\begin{equation} \label{CE}
\frac{\partial \mathcal{R}}{\partial t}=\Delta \mathcal{R} + Q(\mathcal{R})    
\end{equation}
where $Q(\mathcal{R})=\mathcal{R}^2+\mathcal{R}^{\#}$ is a homogeneous quadratic polynomial in the components of $\mathcal{R}$. As an immediate consequence of the curvature evolution \cref{CE}, it follows that the curvature operator of an Einstein manifold $(M,g,\lambda)$ satisfies the following \textit{curvature identity}:
\begin{equation}\label{Eq11}
\Delta \mathcal{R} + Q(\mathcal{R}) = 2\lambda \mathcal{R},
\end{equation} 
(see \cite{BCI}, \cite{PTLRF} for more details about \cref{Eq11}).
It is clear that if $g$ is a fixed point of \cref{RFE}, then the corresponding $\mathcal{R}$ is a fixed point of \cref{CE}, both up to homothety. This motivates us to ask if and when the converse is true. \textit{More generally, if the Riemann curvature operator $\mathcal{R}$ of a manifold $(M,g)$, is a fixed point of \cref{CE} up to a conformal factor, and in particular it satisfies the following equation at each point,
\begin{equation}\label{CI1}
\Delta \mathcal{R} + Q(\mathcal{R}) = h \mathcal{R}
\end{equation} 
for some smooth function $h \in C^{\infty}(M)$, then one would like to know when we can conclude $g$ to be a fixed point of \cref{RFE} upto homothety or equivalently, $g$ is Einstein.} We refer to \cref{CI1} as the \textit{Curvature identity}.\\ It is straightforward to see that a complete Riemannian manifold whose universal cover is a Riemannian product of non-flat Einstein manifolds satisfying the Curvature identity itself reduces to an Einstein manifold. However, a Riemannian product of Einstein manifolds where one of them is flat also satisfies the Curvature identity. It is natural to seek more Riemannian manifolds that satisfy the Curvature identity. We first observe the following.
\begin{rmk}\label{RS}
It is easy to check that a closed manifold $(M^{n},g)$ satisfying the Curvature identity for $h = \frac{2s}{n}$, where $s$ is the scalar curvature (not necessarily a constant) of $(M^{n},g)$, reduces to an Einstein manifold. In fact, taking the trace of the Curvature identity yields,
\begin{equation}\label{TCI}
\Delta s + 2||Ric_{g}||^{2} = \frac{2s^{2}}{n}
\end{equation}
Using the inequality $||Ric_{g}||^{2} \geq \frac{s^{2}}{n}$ and applying the maximal principle in \cref{TCI}, it follows that the equality holds. Consequently,  $Ric_{g} = \frac{s}{n}g$, i.e., $(M,g)$ is an Einstein manifold. The same conclusion holds for any complete Riemannian manifold for which the scalar curvature attains a global minimum.  
\end{rmk}
It follows from the above discussion that in order to construct non-trivial Riemannian manifolds satisfying the Curvature identity, one needs to choose both the manifold and the conformal factor $h$ carefully. Since Ricci solitons are generalized fixed points of the Ricci flow, one would like to know in particular if there are Ricci solitons satisfying the Curvature identity.
More generally, one would also like to pose this question for all quasi-Einstein manifolds. It turns out that in a non-compact setup, one can construct non-rigid quasi-Einstein examples satisfying the Curvature identity.
\begin{rmk}
     We illustrate the previous statement with the following examples, both constructed on the product manifold $M = \mathbb{R} \times N$, equipped with the product metric $g$ where $(N,g_{N})$ is an Einstein manifold with Einstein constant $\lambda$.
    \begin{enumerate}
        \item If  $\lambda < 0$, then $(M,g,f,m,\lambda)$ is a quasi-Einstein manifold where $f : M \rightarrow \mathbb{R}$ is defined as $f(t,x) = - m\ln({\cosh({\sqrt{\frac{-\lambda}{m}}t}}))$, where $t \in \mathbb{R}$ and $x \in N$. 
        \item If $\lambda \neq 0$, then $(M,g,f,\lambda)$ is a Ricci soliton where 
        $f : M \rightarrow \mathbb{R}$ is defined as $f(t,x) = \frac{\lambda t^{2}}{2}$, where $t \in \mathbb{R}$ and $x \in N$.
    \end{enumerate}
    In both cases, it is straightforward to verify that $(M,g)$ satisfies the Curvature identity for $h = 2\lambda$.
\end{rmk}
In light of this, we will restrict our discussion to closed quasi-Einstein manifolds. To this end, we prove the following result.
\begin{thm}\label{MF}
Let $(M,g,f,m,\lambda)$ be a closed quasi-Einstein manifold satisfying the Curvature identity. Then $\int ((h-2\lambda)s)dV_{g} \geq 0$ and $(M,g,f)$ is rigid if and only if $\int ((h-2\lambda)s)dV_{g} = 0$, where $s$ is the scalar curvature of $(M,g)$.
\end{thm}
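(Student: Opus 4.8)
The plan is to collapse the curvature identity to a single scalar equation by tracing it over $\wedge^{2}TM$, and then feed the resulting integral relation into the standard identities satisfied by a quasi-Einstein metric. Tracing $\Delta\mathcal{R}+Q(\mathcal{R})=h\mathcal{R}$ is exactly the computation that produces the evolution of scalar curvature under \cref{RFE}: since $\operatorname{tr}\mathcal{R}$ is a fixed multiple of $s$ and the trace of the reaction term $Q(\mathcal{R})=\mathcal{R}^{2}+\mathcal{R}^{\#}$ contributes the $|Ric|^{2}$-term, the identity becomes $\Delta s+2|Ric|^{2}=h\,s$ on $M$. Integrating over the closed manifold annihilates $\int_{M}\Delta s\,dV_{g}$, so $2\int_{M}|Ric|^{2}\,dV_{g}=\int_{M}h\,s\,dV_{g}$, and hence the hypothesis $\int_{M}(h-2\lambda)s\,dV_{g}=0$ is \emph{equivalent} to
\[
\int_{M}|Ric|^{2}\,dV_{g}\;=\;\lambda\int_{M}s\,dV_{g}.
\]
So \Cref{MF} reduces to proving that this integral identity holds if and only if $(M,g,f)$ is rigid. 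The ``only if'' direction is immediate: a rigid closed quasi-Einstein manifold has $Ric=\lambda g$ and $f$ constant — the potential of a closed quasi-Einstein Einstein metric is forced to be constant, by a maximum-principle argument applied to $u=e^{-f/m}$ when $m<\infty$ and directly from the trace of the soliton equation when $m=\infty$ — so $s\equiv n\lambda$ and $\int_{M}|Ric|^{2}\,dV_{g}=n\lambda^{2}\operatorname{Vol}(M)=\lambda\int_{M}s\,dV_{g}$.

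For the substantive direction, suppose $\int_{M}|Ric|^{2}\,dV_{g}=\lambda\int_{M}s\,dV_{g}$. When $\lambda\le 0$ there is nothing to prove, since a closed quasi-Einstein manifold with $\lambda\le 0$ is already known to be rigid (\cite{KKWP} for $m<\infty$, \cite{RSTV} for $m=\infty$); so assume $\lambda>0$. The pointwise Cauchy--Schwarz inequality $|Ric|^{2}\ge s^{2}/n$ and the integral identity give $\tfrac1n\int_{M}s^{2}\,dV_{g}\le\lambda\int_{M}s\,dV_{g}$, while a second Cauchy--Schwarz estimate, $\bigl(\int_{M}s\,dV_{g}\bigr)^{2}\le\operatorname{Vol}(M)\int_{M}s^{2}\,dV_{g}$, then yields $\bigl(\int_{M}s\,dV_{g}\bigr)^{2}\le n\lambda\operatorname{Vol}(M)\int_{M}s\,dV_{g}$. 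On the other hand, tracing the defining quasi-Einstein equation and integrating over $M$ gives $\int_{M}s\,dV_{g}=n\lambda\operatorname{Vol}(M)+\tfrac1m\int_{M}|\nabla f|^{2}\,dV_{g}$ (with $\tfrac1m$ interpreted as $0$ when $m=\infty$), so $\int_{M}s\,dV_{g}\ge n\lambda\operatorname{Vol}(M)>0$, with equality exactly when $f$ is constant. Dividing the earlier inequality by $\int_{M}s\,dV_{g}>0$ forces $\int_{M}s\,dV_{g}\le n\lambda\operatorname{Vol}(M)$, hence equality everywhere: $f$ is constant when $m<\infty$, and the equality cases of the two Cauchy--Schwarz steps give $s\equiv n\lambda$ and $Ric=\lambda g$, so when $m=\infty$ the Einstein condition forces $Hess(f)=0$ and thus $f$ constant on the closed manifold $M$. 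In every case $(M,g,f)$ is rigid.

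The step I expect to require the most care is the trace computation — confirming that $\operatorname{tr}Q(\mathcal{R})$ reproduces $|Ric|^{2}$ in the normalization of $\mathcal{R}$ in force, and matching the resulting integral condition precisely with the hypothesis $\int_{M}(h-2\lambda)s\,dV_{g}=0$. After that, the argument is a short chain of an integration by parts and two Cauchy--Schwarz inequalities; the only real subtlety is the sign bookkeeping, which is exactly why the range $\lambda\le 0$ — where $\int_{M}s\,dV_{g}$ need not be positive and the division above is illegitimate — is quarantined and dispatched by citing the known unconditional rigidity.
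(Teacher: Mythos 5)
Your proposal is correct, but it takes a genuinely different route from the paper's. Both arguments open the same way: tracing the identity with the normalization $\mathcal{R}=2R$ gives $\operatorname{tr}\mathcal{R}=2s$ and $\operatorname{tr}Q(\mathcal{R})=4\|Ric\|^2$, hence $\Delta s+2\|Ric\|^2=hs$, and integrating turns the hypothesis into $\int_M\|Ric\|^2\,dV_g=\lambda\int_M s\,dV_g$ --- so the step you flagged as delicate does work out exactly as you hoped. From there the paper (proof of \cref{RF}) keeps working pointwise: it splits $\|Ric\|^2$ into its trace-free part plus $s^2/n$, substitutes the traced quasi-Einstein equation, and invokes Lemma~3.2 of \cite{RQE} together with the Bochner formula to express $(h-2\lambda)s$ as a divergence plus manifestly non-negative terms, arriving at the exact identity \cref{Eq319} whose right-hand side is $2\int\|Ric-\tfrac{s}{n}g\|^2+\tfrac{2}{m^2n}\int\|\nabla f\|^4+\tfrac{2\lambda}{m}\int\|\nabla f\|^2+\tfrac{4}{n}\int\|Hess(f)\|^2$; the equivalence is then read off. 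You instead stay at the level of integrals, squeezing $\int s=n\lambda\operatorname{Vol}(M)+\tfrac1m\int\|\nabla f\|^2$ between two Cauchy--Schwarz inequalities when $\lambda>0$ and quarantining $\lambda\le 0$ via the unconditional rigidity of \cite{KKWP} and \cite{RSTV}. Your version is more elementary (no warped-product correspondence, no Bochner formula, no Lemma~3.2 of \cite{RQE}), and it is arguably more careful about signs: the term $\tfrac{2\lambda}{m}\int\|\nabla f\|^2$ in \cref{Eq319} is negative when $\lambda<0$, so the paper's reading of that identity also implicitly needs the $\lambda\le 0$ rigidity results you cite explicitly. What the paper's computation buys in exchange is the refined pointwise/integral identity itself, which sits alongside \cref{LF} in the machinery used for \cref{CF}; your argument delivers precisely \cref{MF} (including, along the way, the inequality $\int(h-2\lambda)s\,dV_g\ge 0$) and nothing more. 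Your equality analysis in the $m=\infty$ case --- constancy of $s$ from the second Cauchy--Schwarz, $Ric=\lambda g$ from the first, then $Hess(f)=0$ and hence $f$ constant on a closed manifold --- is sound.
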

\noindent As an immediate consequence of \cref{MF}, we have the following result.
\begin{thm}\label{CF}
   Let $(M,g,f,m,\lambda)$ be a closed quasi-Einstein manifold satisfying the Curvature identity. If a global maximum of $h$ coincides with either a local maximum or a local minimum point of $f$, then $(M,g,f)$ is rigid. In particular, if $h$ is a constant function, then $g$ is rigid.
\end{thm}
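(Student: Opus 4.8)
The plan is to deduce this directly from \cref{MF}: since $(M,g,f,m,\lambda)$ satisfies the Curvature identity II, rigidity is equivalent to $\int_{M}(h-2\lambda)s\,dV_{g}=0$, so it suffices to establish this vanishing. If $\lambda\le 0$ the metric is already rigid (\cite{KKWP} for finite $m$, \cite{RSTV} for $m=\infty$), so assume $\lambda>0$. Integrating the trace $s=n\lambda-\Delta f+\tfrac1m|\nabla f|^{2}$ of the quasi-Einstein equation over the closed manifold $M$ then gives $\int_{M}s\,dV_{g}=n\lambda\,\mathrm{Vol}(M)+\tfrac1m\int_{M}|\nabla f|^{2}\,dV_{g}>0$, and the scalar curvature of a closed quasi-Einstein manifold with $\lambda>0$ is positive, so $s>0$ on $M$.

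Step~1 (a lower bound). Taking the metric trace of $\Delta\mathcal{R}+Q(\mathcal{R})=h\mathcal{R}$ (the trace counterpart of the curvature evolution \cref{CE}, with $\partial_{t}$ replaced by multiplication by $h$) gives the pointwise identity
$$\Delta s+2|Ric|^{2}=hs\qquad\text{on }M.$$
Integrating this and inserting $Ric=\lambda g-Hess\,f+\tfrac1m\,df\otimes df$ yields
$$\int_{M}(h-2\lambda)s\,dV_{g}=2\int_{M}\Big|Hess\,f-\tfrac1m\,df\otimes df\Big|^{2}dV_{g}+\frac{2\lambda}{m}\int_{M}|\nabla f|^{2}\,dV_{g}\ \ge\ 0,$$
with equality precisely when $f$ is constant (for $m=\infty$ the last term is absent). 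Hence $\int_{M}(h-2\lambda)s\,dV_{g}\ge 0$, and it remains only to prove the reverse inequality; this is where the hypothesis enters.

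Step~2 (an upper bound). Let $p\in M$ be the common point. Since $\nabla f(p)=0$, the quasi-Einstein equation at $p$ reads $Ric(p)=\lambda g(p)-Hess\,f(p)$, and since $p$ is a local maximum or a local minimum of $f$ the Hessian $Hess\,f(p)$ is semidefinite, so its eigenvalues all have one sign and $(\Delta f(p))^{2}\ge|Hess\,f(p)|^{2}$. Using the second Bianchi (divergence) identity for quasi-Einstein metrics, $\tfrac12\nabla s=Ric(\nabla f)+\tfrac1m\,Hess\,f(\nabla f)-\tfrac1m(\Delta f)\nabla f$, one takes one further divergence and evaluates at $p$ (where $\nabla f(p)=0$) to get
$$\Delta s(p)=2\Big(\lambda\,\Delta f(p)-|Hess\,f(p)|^{2}-\tfrac1m\big((\Delta f(p))^{2}-|Hess\,f(p)|^{2}\big)\Big).$$
Feeding this, together with $s(p)=n\lambda-\Delta f(p)$ and $|Ric(p)|^{2}=|\lambda g(p)-Hess\,f(p)|^{2}$, into $hs=\Delta s+2|Ric|^{2}$ at $p$ and simplifying collapses to
$$h(p)=2\lambda-\frac{2\big((\Delta f(p))^{2}-|Hess\,f(p)|^{2}\big)}{m\,s(p)}\ \le\ 2\lambda,$$
using $s(p)>0$ (and noting that in the local-maximum case $s(p)\ge n\lambda$ anyway; for $m=\infty$ the correction term drops and $h(p)=2\lambda$). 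Since $p$ is a global maximum of $h$, this forces $h\le 2\lambda$ on all of $M$; combined with $s>0$ we obtain $\int_{M}(h-2\lambda)s\,dV_{g}\le 0$. Together with Step~1, $\int_{M}(h-2\lambda)s\,dV_{g}=0$, and \cref{MF} gives that $(M,g,f)$ is rigid.

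The main obstacle is the pointwise computation in Step~2: establishing the divergence identity for $\nabla s$ on a quasi-Einstein manifold, and then carefully isolating which terms survive after a second divergence and after imposing $\nabla f(p)=0$, so as to land on the stated formula for $h(p)$; it is precisely the inequality $(\Delta f(p))^{2}\ge|Hess\,f(p)|^{2}$, available because $p$ is a local extremum of $f$, that makes the correction term non-negative. A secondary point requiring care is the positivity $s>0$ on a closed quasi-Einstein manifold with $\lambda>0$, which underlies both the division by $s(p)$ and the final integral estimate; if it is not invoked as a known fact it can be recovered afterwards from $\Delta s=hs-2|Ric|^{2}\le 2\lambda s-\tfrac2n s^{2}$ together with $\int_{M}s\,dV_{g}>0$, once $h\le 2\lambda$ has been established.
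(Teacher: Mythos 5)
Your proof is correct, and its overall skeleton is the same as the paper's: show that the hypothesis forces $h\le 2\lambda$ everywhere, note that $s>0$ (after discarding the case $\lambda\le 0$ via \cite{KKWP,RSTV}), deduce $\int_M (h-2\lambda)s\,dV_g\le 0$, and combine this with the reverse inequality $\int_M (h-2\lambda)s\,dV_g\ge 0$ of \cref{MF} (\cref{RF}) to conclude rigidity. Where you genuinely diverge is in how the key pointwise bound $h(p)\le 2\lambda$ is obtained. The paper gets it from \cref{LF}, the identity $(h-2\lambda)s=\tfrac4m r(\nabla f,\nabla f)+\tfrac2m(\|Hess f\|^2-(\Delta f)^2)+g(\nabla f,\nabla s)$, which it proves (for finite $m$) by passing to the associated warped product Einstein manifold and carrying out a Riemannian submersion computation on the full curvature operator. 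You instead derive only what is needed — the value of $(h-2\lambda)s$ at a critical point of $f$ — directly on $M$ from the contracted second Bianchi identity $\tfrac12\nabla s=Ric(\nabla f)+\tfrac1m Hess f(\nabla f)-\tfrac1m(\Delta f)\nabla f$, taking one divergence and evaluating where $\nabla f=0$; your resulting formula $h(p)=2\lambda-\tfrac{2}{m\,s(p)}\bigl((\Delta f(p))^2-\|Hess f(p)\|^2\bigr)$ is exactly \cref{LF} specialized to $\nabla f(p)=0$, and the semidefiniteness of $Hess f(p)$ gives $(\Delta f(p))^2\ge\|Hess f(p)\|^2$ just as in the paper. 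This buys a more elementary, self-contained argument that avoids the warped-product machinery entirely (and treats $m\in\mathbb{N}$ and $m=\infty$ uniformly), at the cost of recovering the identity only at critical points of $f$ rather than globally. Your Step 1 likewise re-proves the inequality of \cref{MF} via the decomposition $Ric=\lambda g-(Hess f-\tfrac1m df\otimes df)$, which is a slightly cleaner grouping than the paper's \cref{Eq319} but yields the same conclusion. One small caveat: your fallback derivation of $s>0$ at the end is circular as stated (the inequality $hs\le 2\lambda s$ already presupposes the sign of $s$), so you should rely on the cited positivity of scalar curvature for closed quasi-Einstein manifolds with $\lambda>0$, as the paper does in the proof of \cref{RC}.
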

\begin{rmk}
It is worth noting that there exist quasi-Einstein manifolds for which $\int((h -2\lambda)s)dV_{g} > 0$ (see \cref{R34} for details).
\end{rmk}
\section{Preliminaries}\label{S2}
   In this paper, we will consider closed, connected, oriented Riemannian manifolds. 
\subsection{Riemannian geometry}
\noindent In this section, we recall some fundamental definitions in Riemannian geometry and establish the notation to be followed (see \cite{RGDC}, \cite{RQE} for more details).
\begin{definition}
    Given a Riemannian manifold $(M,g)$, its \textbf{Riemann curvature tensor $R$} is a $(0,4)$ tensor defined by,
    $$R(X,Y,Z,W) = g(\nabla_{Y}\nabla_{X}Z - \nabla_{X}\nabla_{Y}Z + \nabla_{[X,Y]}Z,W)$$
     where $X$, $Y$, $Z$ and $W$ are arbitrary vector fields defined on an open subset of $M$ and $\nabla$ is the Levi-Civita connection of $(M,g)$.
     \end{definition}

\begin{definition}
A Riemannian manifold $(M,g)$ is said to be
\begin{enumerate} 
\item \textbf{Einstein} if its Ricci curvature is a scalar multiple of the Riemannian metric, i.e., $Ric_{g} = \lambda g \ \ \text{ for some } \ \lambda \in \mathbb{R}.$
\item a \textbf{Ricci soliton} if there exists a vector field $X \in \chi(M)$ such that $$Ric_{g} + \frac{1}{2}L_{X}g = \lambda g,$$  where $\lambda \in \mathbb{R}$  and $L_{X}g$ is the Lie-derivative of $g$ with respect to $X$. In particular, if $X = \nabla f$ for some $f \in C^{\infty}(M)$, then $(M,g)$ is called a \textbf{gradient Ricci soliton}.
\item a \textbf{quasi-Einstein manifold} if there exists a function $f \in C^{\infty}(M)$ such that
$$Ric_{g} + Hess_{g}(f) - \frac{1}{m}df\otimes df = \lambda g$$
where  $\lambda \in \mathbb{R}$, $0 < m \leq \infty$ and $Hess_{g}(f)(X,Y) = g(\nabla_{X}\nabla f,Y)$.
\end{enumerate}
\end{definition}
\begin{remark}
\begin{enumerate}
    \item Note that a closed Ricci soliton is a gradient Ricci soliton (\cite{PRF}).  When $m = \infty$, then a quasi-Einstein manifold $(M,g,f)$ becomes a gradient Ricci soliton.
    \item For $m \in \mathbb{N}$, D.S.~Kim and Y.H.~Kim showed that $(M,g,f)$ corresponds to a warped product Einstein metric (see \cite{KKWP}).
\end{enumerate}
\end{remark}

\begin{definition}
A quasi-Einstein manifold $(M,g,f,m, \lambda)$ is said to be rigid if it reduces to an Einstein manifold. 
\end{definition}
\begin{remark}
It is easy to see that a closed, oriented, connected quasi-Einstein manifold $(M,g,f,m,\lambda)$ is rigid if and only if $f$ is constant. This is not true in general for non-compact manifolds. A complete finite $m$ quasi-Einstein manifold $(M,g,f,m,\lambda)$ is rigid if and only if $f$ is constant or $M$ is diffeomorphic to $\mathbb{R}^{n}$ with a warped product structure (see \cite{RQE}).
\end{remark}
\begin{remark}
    In this paper, the following conventions will be followed.
    \begin{enumerate}
        \item Given a vector field $X$ in $(M,g)$, the divergence of $X$  is defined pointwise as $div(X) = \sum_{i} g(\nabla _{e_{i}}X,e_{i})$ where $\{e_{i}\}$ is an orthonormal basis of the tangent space at that point.
        \item Given $f \in C^{\infty}(M)$, $\Delta f = div(\nabla f)$.
        \end{enumerate}
\end{remark}
\subsection{Algebraic curvature operators}
\noindent We now recall some relevant concepts of multilinear algebra (see \cite{BCRF} for more details).\\
\noindent Let $(\mathbb{R}^{n},\langle , \rangle)$ be the Euclidean inner product space and $\Lambda ^{2}\mathbb{R}^{n}$ denote the exterior $2$-product of  $\mathbb{R}^n$ equipped with the canonical inner product induced by the Euclidean inner product, defined by
\[
\langle u \wedge v,\; w \wedge z \rangle := \det \begin{pmatrix}
\langle u, w \rangle & \langle u, z \rangle \\
\langle v, w \rangle & \langle v, z \rangle
\end{pmatrix}, \& u,v,w,z \in \mathbb{R}^{n}.
\]
It is easy to see that with this inner product, $\{e_{i} \wedge e_{j}\}_{i<j}$ becomes an orthonormal basis of $\Lambda^2 \mathbb{R}^n$ whenever $\left\{e_i\right\}_{i=1}^n$ is an orthonormal basis of $\mathbb{R}^n$. $\Lambda^2 \mathbb{R}^n$ is identified with the Lie algebra $\mathfrak{so}(n)$ via the linear map $x \wedge y: \mathbb{R}^n \rightarrow \mathbb{R}^n$ defined by $(x \wedge y): z \mapsto\langle y, z\rangle x-\langle x, z\rangle y$, for $x, y, z \in \mathbb{R}^n$. With this identification $\mathfrak{so}(n)$ has the inner product $$\langle A, B\rangle=-\frac{1}{2} \operatorname{tr}(A B), \& A,B \in \mathfrak{so}(n).$$
Let $S^{2}(\mathfrak{so}(n))$ denote the space of all self-adjoint endomorphisms of $\Lambda^{2}\mathbb{R}^{n}$.\\
\noindent Given a Riemannian manifold $(M,g)$ and a point $p \in M$, the tangent space $T_{p}M$ can be identified with $\mathbb{R}^{n}$. With this identification, all the definitions mentioned above can be extended point-wise to the Riemannian manifold.
\begin{definition}
Given a Riemannian manifold $(M,g)$ and $p \in M$, its \textbf{Riemann curvature operator $\mathcal{R}$} is the symmetric bilinear form on $\Lambda ^{2} T_{p}M$ or self-adjoint endomorphism of $\Lambda ^{2} T_{p}M$ defined by,
$$\mathcal{R}(x \wedge y, z \wedge w) = \langle \mathcal{R}(x \wedge y), z \wedge w> = 2R(x,y,z,w), \  x,y,z,w \in T_pM.$$
\end{definition}

   \noindent Modelled on the properties of the Riemann curvature operator, we define the algebraic curvature operator.
\begin{definition}
    The space of \textbf{algebraic curvature operators} (in dimension $n$), denoted by $S_{B}^{2}(\mathfrak{so}(n))$, defined to be the subspace of $S^{2}(\mathfrak{so}(n))$ satisfying the following algebraic Bianchi identity.
    $$\langle R(x \wedge y), w \wedge z\rangle  + \langle R(y\wedge w),x \wedge z\rangle  + \langle R(w \wedge x), y \wedge z\rangle  = 0$$
    where $R \in S^{2}(\mathfrak{so}(n))$ and $x,y,w,z \in \mathbb{R}^{n}$.
    \end{definition}
    Since algebraic curvature operators are modelled on the Riemann curvature operator, we can similarly define the corresponding Ricci tensor and scalar curvature (refer to \cite{BCRF} for details).
    \begin{definition}
        \begin{enumerate}
            \item  For $R \in S_{B}^{2}(\mathfrak{so}(n))$, the \textbf{Ricci tensor} $Ric(R)$ is defined by
    $$\langle Ric(R)(x),y\rangle  = \sum_{k = 1}^{n} \langle R(x \wedge e_{k}), y \wedge e_{k} \rangle $$
    for all $x,y \in \mathbb{R}^{n}$ and $\{e_{k}\}$ is an orthonormal basis of $\mathbb{R}^{n}$.
    \item The \textbf{scalar curvature} $Scal(R)$ is the trace of $Ric(R)$
        \end{enumerate}
    \end{definition}
    
     Now, we will introduce an operator that is useful in defining the evolution equation of the Riemann curvature operator.
     \begin{definition}
           The \textbf{sharp operator} $\# : S^{2}(\mathfrak{so}(n)) \times S^{2}(\mathfrak{so}(n)) \rightarrow S^{2}(\mathfrak{so}(n))$ is a bilinear map defined by,
    $$\langle (A \# B)(\phi), \psi\rangle  = \sum_{\alpha, \beta} \langle [A(\omega _{\alpha}),B(\omega_{\beta})], \phi\rangle \langle [\omega_{\alpha}, \omega_{\beta}], \psi\rangle $$
    where  $\{\omega_{\alpha}\}$ is an orthonormal basis of $S^{2}(\mathfrak{so}(n))$.
     \end{definition}
   In particular, if $A = B$, we define the following operator.
   \begin{definition}
         The operator $Q : S^{2}(\mathfrak{so}(n))  \rightarrow S^{2}(\mathfrak{so}(n))$ is defined as,
    $$Q(A) = A \circ A + A \# A$$
   \end{definition}
    \begin{remark}
        Note that for an algebraic curvature operator $R$,
         $$\langle Ric(Q(R))(e_{i}),e_{j}\rangle  = \sum_{k,l}\langle  Ric(R)(e_{k}),e_{l} \rangle  \langle R(e_{i} \wedge e_{k}),e_{j} \wedge e_{l}\rangle  $$
    $$Scal(Q(R)) = \sum_{k,l} \langle Ric(e_{k}),e_{l}\rangle ^{2}$$
    \end{remark}
\section{Main Results}
\noindent 
In this section, we prove Theorems \ref{MF} and \ref{CF}. We begin by establishing the following result as a direct consequence of Theorem 1.2 of \cite{CPW} and Corollary 1.7 of \cite{CQ}.
\begin{lemma}
If $(M,g,f,m,\lambda)$ is a closed quasi-Einstein manifold with $m > 1$, harmonic Weyl tensor, and satisfying $W(\nabla f, -,- ,\nabla f) = 0$ and the Curvature identity, then $(M,g,f)$ is rigid. Moreover, a closed Bach-flat quasi-Einstein manifold $(M^{n},g,f,m,\lambda)$ with $m \neq 1$, $n \geq 4$ satisfying the Curvature identity is also rigid.
\end{lemma}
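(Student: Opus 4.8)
The plan is to trace Curvature identity II down to a scalar equation, feed the result into Theorem \ref{MF}, and use the cited structure theorems to verify the resulting integral criterion. First I would apply the contraction $Scal(\cdot)$ to both sides of $\Delta\mathcal{R}+Q(\mathcal{R})=h\mathcal{R}$. Since $Scal$ is a fixed metric contraction it commutes with the rough Laplacian, so $Scal(\Delta\mathcal{R})=\Delta(Scal\,\mathcal{R})$; by the Remark in Section \ref{S2}, $Scal(Q(\mathcal{R}))=\sum_{k,l}\langle Ric(\mathcal{R})(e_k),e_l\rangle^2$, which under the normalization $\mathcal{R}(x\wedge y,z\wedge w)=2R(x,y,z,w)$ equals $4|Ric_g|^2$, while $Scal(\mathcal{R})=2s$. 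This yields the pointwise identity $\Delta s+2|Ric_g|^2=hs$ on $M$; integrating over the closed manifold gives $\int_M hs\,dV_g=2\int_M|Ric_g|^2\,dV_g$, and hence
$$\int_M(h-2\lambda)s\,dV_g=2\int_M\bigl(|Ric_g|^2-\lambda s\bigr)\,dV_g=2\int_M|Ric_g-\lambda g|^2\,dV_g+\frac{2\lambda}{m}\int_M|\nabla f|^2\,dV_g,$$
where the last step uses $|Ric_g|^2-\lambda s=|Ric_g-\lambda g|^2+\lambda(s-n\lambda)$ together with the traced quasi-Einstein equation $s-n\lambda=\tfrac1m|\nabla f|^2-\Delta f$. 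When $\lambda\le0$ this range is classical (Kim and Kim), so the substance is the case $\lambda>0$, where the right-hand side is nonnegative and Theorem \ref{MF} reduces rigidity to showing it vanishes.

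For that I would invoke the cited results. By Theorem 1.2 of \cite{CPW}, a closed quasi-Einstein manifold with $m>1$, harmonic Weyl tensor and $W(\nabla f,\cdot,\cdot,\nabla f)=0$ is locally conformally flat, and likewise, by Corollary 1.7 of \cite{CQ}, a closed Bach-flat quasi-Einstein manifold with $m\ne1$ and $n\ge4$ is locally conformally flat. By the structure theorem of \cite{LCFQEM} such a manifold is locally a warped product with fibres of constant curvature, and for a locally conformally flat metric the curvature operator $\mathcal{R}$ is an explicit algebraic function of $Ric_g$ and $g$. Substituting this expression into Curvature identity II and isolating the traceless part — equivalently, plugging the warped-product ansatz into the identity and using the ODE obeyed by the warping function — forces $Ric_g=\lambda g$, i.e. $(M,g)$ is Einstein; closedness then gives $Hess_g f=\tfrac1m df\otimes df$, whence $\Delta f=\tfrac1m|\nabla f|^2$ and $\int_M|\nabla f|^2\,dV_g=0$, so $f$ is constant and $(M,g,f)$ is rigid. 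Equivalently, one may close the argument by noting that the displayed identity and Theorem \ref{MF} reduce the whole statement to the vanishing of $\int_M(|Ric_g|^2-\lambda s)\,dV_g$ for a closed, locally conformally flat quasi-Einstein manifold, which the warped-product description supplies.

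The main obstacle is this last step: genuinely exploiting local conformal flatness — computing $\Delta\mathcal{R}+Q(\mathcal{R})$ for a conformally flat metric, extracting the traceless-Ricci component of Curvature identity II, and combining it with the warping ODE — so as to rule out non-rigid closed examples when $\lambda>0$. The trace computation of the first paragraph and the reduction to $\lambda>0$ are routine; the conformally flat curvature bookkeeping, together with the correct handling of the divergence terms in the resulting integral identity, is where the work concentrates.
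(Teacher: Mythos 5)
Your first paragraph is sound and essentially reproduces what the paper itself establishes later in \cref{RF}: tracing Curvature identity II gives $\Delta s + 2||Ric||^{2} = hs$, and integration together with the traced quasi-Einstein equation yields $\int (h-2\lambda)s\,dV_{g} = 2\int ||Ric-\lambda g||^{2}dV_{g} + \frac{2\lambda}{m}\int ||\nabla f||^{2}dV_{g}$, so for $\lambda>0$ (the only case not already settled by \cite{KKWP}) rigidity is equivalent to the vanishing of $\int(h-2\lambda)s\,dV_{g}$. The paper offers no written argument for this lemma beyond the appeal to Theorem 1.2 of \cite{CPW} and Corollary 1.7 of \cite{CQ}, so the entire substance of a proof lies in the step you defer to your second paragraph, and that is precisely where your proposal breaks down.

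Two concrete problems. First, the classification you invoke does not give local conformal flatness: under harmonic Weyl tensor and $W(\nabla f,\cdot,\cdot,\nabla f)=0$ (and likewise under Bach-flatness, which reduces to that case), the non-trivial alternative in the cited classification is a warped product over an interval whose fibre is merely Einstein, not necessarily of constant curvature, so the Weyl tensor of $M$ need not vanish and the structure theorem of \cite{LCFQEM} is not available. Second, and decisively, the step that is supposed to finish the argument --- ``substituting this expression into Curvature identity II \dots forces $Ric_{g}=\lambda g$'' --- is asserted rather than performed, as you yourself concede in your closing paragraph. Non-trivial closed quasi-Einstein metrics with $\lambda>0$ and finite $m$ do exist, and the cited classification does not by itself exclude them; one must actually compute $\Delta\mathcal{R}+Q(\mathcal{R})$ on the warped-product models (or otherwise show $\int(h-2\lambda)s\,dV_{g}\le 0$ there) to close the integral criterion of \cref{MF}. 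As it stands, your proposal is a correct reduction followed by an unproved claim, so it does not constitute a proof of the lemma.
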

Next, we would like to prove \cref{MF}.
\begin{proof}[Proof of \cref{MF}]
    We have,
   $$\Delta s + 2||Ric||^{2} = hs$$
    \begin{eqnarray}\label{Eq312}
        (h -2\lambda)s &=&  \Delta s + 2||Ric||^{2} - 2\lambda s
    \end{eqnarray}
    For a quasi-Einstein manifold, we have
    \begin{equation}\label{Eq313}
         s + \Delta f - \frac{1}{m}||\nabla f||^{2} = \lambda n
    \end{equation}
    By substituting \cref{Eq313} in \cref{Eq312}, we get,
    \begin{eqnarray}\label{Eq314}
        (h - 2\lambda)s &=& \Delta s + 2||Ric - \frac{s}{n}g||^{2} + \frac{2}{n}(\Delta f)^{2} + \frac{2}{m^{2}n}||\nabla f||^{4} - 2\lambda \Delta f + \frac{2\lambda}{m}||\nabla f||^{2}\nonumber\\&& - \frac{4}{mn}\Delta f ||\nabla f||^{2}
    \end{eqnarray}
    By Lemma 3.2 of \cite{RQE},
    \begin{equation}\label{Eq315}
        \frac{1}{2}\Delta ||\nabla f||^{2} = ||Hess(f)||^{2} - Ric(\nabla f, \nabla f) + \frac{2}{m}||\nabla f||^{2}\Delta f
    \end{equation}
     \begin{eqnarray}\label{Eq316}
        (h - 2\lambda)s &=& \Delta s + 2||Ric - \frac{s}{n}g||^{2} + \frac{2}{n}(\Delta f)^{2} + \frac{2}{m^{2}n}||\nabla f||^{4} - 2\lambda \Delta f + \frac{2\lambda}{m}||\nabla f||^{2}\nonumber\\&& + \frac{2}{n}||Hess(f)||^{2} - \frac{2}{n}Ric(\nabla f, \nabla f) - \frac{1}{n}\Delta ||\nabla f||^{2}
    \end{eqnarray}
     By Bochner's formula (\cite{HRFBC}),
    \begin{equation}\label{Eq317}
        \frac{1}{2}\Delta ||\nabla f||^{2} = ||Hess(f)||^{2} + Ric(\nabla f, \nabla f) + g(\nabla f, \nabla \Delta f)
    \end{equation}
    Using \cref{Eq317} in \cref{Eq316},
     \begin{eqnarray}\label{Eq318}
        (h - 2\lambda)s &=& \Delta s + 2||Ric - \frac{s}{n}g||^{2} + \frac{2}{n}(\Delta f)^{2} + \frac{2}{m^{2}n}||\nabla f||^{4} - 2\lambda \Delta f + \frac{2\lambda}{m}||\nabla f||^{2}\nonumber\\&& + \frac{4}{n}||Hess(f)||^{2} + \frac{2}{n}g(\nabla f, \nabla \Delta f) - \frac{2}{n}\Delta ||\nabla f||^{2}
    \end{eqnarray}
    Using the divergence theorem in \cref{Eq318},
    \begin{eqnarray}\label{Eq319}
        \int  ((h - 2\lambda)s) dV_{g} &=&  2\int ||Ric - \frac{s}{n}g||^{2} dV_{g}  + \frac{2}{m^{2}n}\int ||\nabla f||^{4} dV_{g} + \frac{2\lambda}{m}\int
        ||\nabla f||^{2} dV_{g}\nonumber\\&& + \frac{4}{n}\int||Hess(f)||^{2}dV_{g} 
    \end{eqnarray}
    Using \cref{Eq319}, $ \int  ((h - 2\lambda)s) dV_{g} \geq 0$ and  and $(M,g,f)$ is rigid if and only if $\int ((h-2\lambda)s)dV_{g} = 0$.
\end{proof}
\begin{remark}\label[remark]{R34}
  It is important to note that there are quasi-Einstein manifolds that fail to satisfy the above theorem. Wang and Zang proved that for a compact Riemann surface $(M,g)$ with positive first Chern class and a non-vanishing Futaki invariant, $\int ((h -2\lambda)s)dV_{g} > 0$ and there exists a $X \in \chi(M)$ such that $(M,g,X)$ is a non-rigid Ricci soliton(see \cite{EXK} for details).
\end{remark}
Analogous to Lemma 2.1 of \cite{CGSPW}, we obtained the following result for a quasi-Einstein manifold which will be used in \cref{CF}.
\begin{lemma}\label[lemma]{LRL}
    If $(M,g,f,m,\lambda)$ is a quasi-Einstein manifold, then
    {\small
   \begin{align}
       (\Delta R)(X,Y,Z,W) + \frac{1}{2}Q(\mathcal{R})(X,Y,Z,W) =& 2\lambda R(X,Y,Z,W) + (\nabla_{\nabla f}R)(X,Y,Z,W)\nonumber\\&+ \frac{1}{m}(R(\nabla f,Y,Z,W)X(f) + R(X,\nabla f,Z,W)Y(f)\nonumber\\&+R(X,Y,\nabla f,W)Z(f) + R(X,Y,Z,\nabla f)W(f))\nonumber\\&+\frac{2}{m}(Hessf(Y,Z)Hessf(X,W)\nonumber\\&- Hessf(X,Z)Hessf(Y,W))
   \end{align}} 
   where $R$ and $\mathcal{R}$ denote the Riemann curvature tensor and Riemann curvature operator of $(M,g)$ respectively and $X,Y,Z,W \in \chi(M)$.
\end{lemma}
\begin{proof}
    Let $p \in M$ and $X,Y,Z,W$ be vector fields on $M$ such that $\nabla X|_{p} = \nabla Y|_{p} = \nabla Z|_{p} = \nabla W|_{p} = 0$. Let $\{E_{i}\}$ be a normal basis at $p$.
    {\small
    \begin{align}\label{LRE}
        (\Delta R)(X,Y,Z,W)=&\sum_{i}(\nabla_{E_{i}}\nabla_{E_{i}}R)(X,Y,Z,W)\nonumber\\=&\sum_{i}(\nabla_{E_{i}}\nabla_{X}R)(E_{i},Y,Z,W) - \sum_{i}(\nabla_{E_{i}}\nabla_{Y}R)(E_{i},X,Z,W)\nonumber\\=&\sum_{i}(R_{X,E_{i}}R)(E_{i},Y,Z,W) - \sum_{i}(R_{Y,E_{i}}R)(E_{i},X,Z,W)\nonumber\\&+ \sum_{i}(\nabla_{X}\nabla_{E_{i}}R)(E_{i},Y,Z,W) - \sum_{i}(\nabla_{Y}\nabla_{E_{i}}R)(E_{i},X,Z,W)
    \end{align}}
    It is easy to see that,
{\small
    \begin{align}\label{CCE}
    \sum_{i}(R_{X,E_{i}}R)(E_{i},Y,Z,W) - \sum_{i}(R_{Y,E_{i}}R)(E_{i},X,Z,W)=& R(Ric(X),Y,Z,W) + R(X,Ric(Y),Z,W)\nonumber\\
    &+ 2(B(X,Y,W,Z) + B(X,W,Y,Z) \nonumber\\& - B(X,Y,Z,W) - B(X,Z,Y,W))\nonumber\\=& R(Ric(X),Y,Z,W) + R(X,Ric(Y),Z,W)\nonumber\\&-\frac{1}{2}Q(\mathcal{R})(X,Y,Z,W)
    \end{align}}
    where $B(X,Y,W,Z) = \langle R(X,-,Y,-),R(W,-Z,-)\rangle$.
    {\small
    \begin{align}\label{GRX}
        \sum_{i}(\nabla_{X}\nabla_{E_{i}}R)(E_{i},Y,Z,W) =&\sum_{i}(\nabla_{X}\nabla_{E_{i}}R)(Z,W,E_{i},Y)\nonumber\\=&-\sum_{i}(\nabla_{X}\nabla_{Z}R)(W,E_{i},E_{i},Y)-\sum_{i}(\nabla_{X}\nabla_{W}R)(E_{i},Z,E_{i},Y)\nonumber\\=&(\nabla_{X}\nabla_{Z}Ric)(Y,W) - (\nabla_{X}\nabla_{W}Ric)(Y,Z)\nonumber\\=& X((\nabla_{Z}Ric)(Y,W)- (\nabla_{W}Ric)(Y,Z))
    \end{align}}
Using $Ric + Hessf - \frac{1}{m}df \otimes df = \lambda g$ in \cref{GRX},
{\small
\begin{align}\label{GRXSE}
    \sum_{i}(\nabla_{X}\nabla_{E_{i}}R)(E_{i},Y,Z,W) =&X((\nabla_{W}Hessf)(Y,Z)-(\nabla_{Z}Hessf)(Y,W))\nonumber\\& + \frac{1}{m}X((\nabla_{Z}df \otimes df)(Y,W) - (\nabla_{W}df \otimes df)(Y,Z))\nonumber\\=&(\nabla_{X}R)(\nabla f,Y,Z,W) + R(\nabla_{X}\nabla f,Y,Z,W)\nonumber\\& + \frac{1}{m}X((\nabla_{Z}df \otimes df)(Y,W) - (\nabla_{W}df \otimes df)(Y,Z))\nonumber\\=&(\nabla_{X}R)(\nabla f,Y,Z,W) + \lambda R(X,Y,Z,W)\nonumber\\&-R(Ric(X),Y,Z,W) + \frac{1}{m}X(f)R(\nabla f,Y,Z,W)\nonumber\\& + \frac{1}{m}X((\nabla_{Z}df \otimes df)(Y,W) - (\nabla_{W}df \otimes df)(Y,Z))\nonumber\\=&(\nabla_{X}R)(\nabla f,Y,Z,W) + \lambda R(X,Y,Z,W)\nonumber\\&-R(Ric(X),Y,Z,W) + \frac{1}{m}X(f)R(\nabla f,Y,Z,W)\nonumber\\&+ \frac{1}{m}((\nabla_{X}Hessf)(Y,Z)W(f) - (\nabla_{X}Hessf)(Y,W)Z(f))\nonumber\\&+\frac{1}{m}(Hessf(Y,Z)Hessf(X,W) - Hessf(Y,W)Hessf(X,Z))
\end{align}}
Similarly, we obtain
{\small
\begin{align}\label{GRYE}
    \sum_{i}(\nabla_{Y}\nabla_{E_{i}}R)(E_{i},X,Z,W)=&(\nabla_{Y}R)(\nabla f,X,Z,W) - \lambda R(X,Y,Z,W)\nonumber\\&-R(Ric(Y),X,Z,W) + \frac{1}{m}Y(f)R(\nabla f,X,Z,W)\nonumber\\&+ \frac{1}{m}((\nabla_{Y}Hessf)(X,Z)W(f) - (\nabla_{Y}Hessf)(X,W)Z(f))\nonumber\\&+\frac{1}{m}(Hessf(X,Z)Hessf(Y,W) - Hessf(X,W)Hessf(Y,Z))
\end{align}}
Using \cref{LRE}, \cref{CCE}, \cref{GRXSE} and \cref{GRYE}, we obtain the result.
\end{proof}
As an immediate consequence, we obtain the following result.
\begin{lemma}\label[lemma]{LF}
    If $(M,g,f,m,\lambda)$ is a closed quasi-Einstein manifold  that satisfies the Curvature identity, then
    \begin{equation}
        \begin{aligned}
            (h - 2\lambda)s = & \frac{4}{m}Ric(\nabla f , \nabla f) + \frac{2}{m} ( ||Hessf||^{2} - (\Delta f)^{2}) + g(\nabla f, \nabla s)\nonumber
        \end{aligned}
    \end{equation}
    where $s$ is the scalar curvature of $(M,g)$ respectively.
\end{lemma}
\begin{proof}
        Using \cref{LRL} and the Curvature identity of $(M,g)$,
        {\small
        \begin{align}\label{Eq34}
            (h-2\lambda)R(X,Y,Z,W) = &\frac{X(f)}{m}R(\nabla f,Y,Z,W) + \frac{Y(f)}{m}R(X,\nabla f,Z,W)\nonumber\\ +& \frac{Z(f)}{m}R(X,Y,\nabla f,W)  + \frac{W(f)}{m}R(X,Y,Z,\nabla f)\\  +& \frac{2}{m}(Hessf(X,W)Hessf(Y,Z)-Hessf(X,Z)Hessf(Y,W))\nonumber \\ +&  (\nabla_{\nabla f}R)(X,Y,Z,W)\nonumber     
            \end{align}}
    By taking the trace of \cref{Eq34} in the second and fourth entries at $p \in M$,
    \allowdisplaybreaks
    {
    {\small
        \begin{align}\label{Eq35}
            (h-2\lambda)r(X,Z) = & \frac{X(f)}{m}r(\nabla f,Z) + \frac{2}{m}R(X,\nabla f,Z,\nabla f) + \frac{Z(f)}{m}r(\nabla f,X)\nonumber\\& + \frac{2}{m}(\sum_{i}Hessf(X,X_{i})Hessf(Z,X_{i}) - Hessf(X,Z)\Delta f)\\& + \sum_{i}(\nabla_{\nabla f}R)(X,X_{i},Z,X_{i})\nonumber
       \end{align}}}
       where $\{X_{i}\}$ is a normal basis at $p$.\\
    We obtain the result by taking the trace of \cref{Eq35}.
\end{proof}
\noindent Using \cref{LF}, we are now ready to prove \cref{CF}.
  \begin{proof}[Proof of \cref{CF}]
 Using \cref{LF}, we can conclude that $h_{max} \leq 2\lambda$ if $f$ attains maxima or minima at a point of global maxima of $h$. Now, the result follows from \cref{MF}.
       \end{proof}
As immediate consequences, we obtain the following results.
\begin{corollary}
 Products of quasi-Einstein manifolds satisfying the conditions of \cref{CF} either reduce to an Einstein manifold or to a Riemannian product of a flat manifold and an Einstein manifold.   
\end{corollary}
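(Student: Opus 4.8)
The plan is to reduce the statement directly to \cref{CRF} by verifying that a Riemannian product of quasi-Einstein manifolds, when it carries the relevant structure, is itself a closed quasi-Einstein manifold with $m\in\mathbb{N}\cup\{\infty\}$ satisfying the Curvature identity II together with the extremum relation between $h$ and $f$.

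First I would set up the product. Let $(M_i,g_i,f_i,m,\lambda)$, $i=1,\dots,k$, be closed quasi-Einstein manifolds (necessarily with a common $m\in\mathbb{N}\cup\{\infty\}$ and a common $\lambda$), put $M=M_1\times\cdots\times M_k$ with $g=g_1\oplus\cdots\oplus g_k$ and $f=\sum_i\pi_i^{*}f_i$. Since $Ric_g$ and $Hess_g(f)$ both respect the splitting $TM=\bigoplus_iTM_i$, the quasi-Einstein equation on $M$ holds provided the mixed terms $\tfrac1m\,df_i\otimes df_j$ with $i\neq j$ vanish; for finite $m$ this forces all but at most one of the $f_i$ to be constant, while for $m=\infty$ no extra condition is needed. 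In all cases $(M,g,f,m,\lambda)$ is a closed quasi-Einstein manifold with $m\in\mathbb{N}\cup\{\infty\}$, so the standing hypotheses of \cref{CRF} are met.

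Next I would propagate the Curvature identity II. On a product the Riemann curvature operator splits as $\mathcal{R}=\mathcal{R}_1\oplus\cdots\oplus\mathcal{R}_k$ relative to $\Lambda^2TM=\bigoplus_i\Lambda^2TM_i\oplus\bigoplus_{i<j}(TM_i\otimes TM_j)$, acting by zero on the mixed summands. Both $\Delta$ and $Q$ are block-diagonal on such operators: $\Delta\mathcal{R}=\bigoplus_i\Delta_{g_i}\mathcal{R}_i$ because $\mathcal{R}_i$ depends only on the $M_i$-variables, and $Q(\mathcal{R})=\bigoplus_iQ(\mathcal{R}_i)$ because the summands $\mathfrak{so}(n_i)\subset\mathfrak{so}(n)$ commute pairwise, killing all cross terms in $\mathcal{R}\circ\mathcal{R}$ and in $\mathcal{R}^{\#}$. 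Hence, if each factor satisfies Curvature identity II with function $h_i$, then $\Delta\mathcal{R}+Q(\mathcal{R})=\bigoplus_i(h_i\mathcal{R}_i)=h\,\mathcal{R}$, where $h$ is the function on $M$ equal to $h_i$ wherever $\mathcal{R}_i\neq0$ (consistent on overlaps, and smooth on the remaining flat locus). Finally, a global maximum point of $h$ lies over a point where some $h_i$ is maximal, and, since $f$ is a sum of functions pulled back from distinct factors, the assumption that this is a local maximum or minimum of $f_i$ makes it one of $f$ as well.

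With all hypotheses of \cref{CRF} verified for $(M,g,f,m,\lambda)$, that theorem yields that $f$ is constant, i.e. $(M,g)$ is Einstein. The only step requiring genuine care is the commutation bookkeeping in the previous paragraph—checking that $Q$ is block-diagonal on product curvature operators and that the resulting $h$ is a well-defined smooth function—and I expect that to be the main (and rather mild) obstacle; the rest is a direct invocation of \cref{CRF}. (Under the alternative reading in which each factor is assumed to satisfy \cref{CRF} outright, the argument shortens further: each $(M_i,g_i)$ is then Einstein with $Ric_{g_i}=\lambda g_i$, and a product of Einstein manifolds with a common Einstein constant is Einstein with that constant.)
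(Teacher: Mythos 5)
The paper supplies no written proof here: the corollary is announced as an ``immediate consequence'' of \cref{CRF}, which is exactly the reading you relegate to your closing parenthesis --- each factor satisfies the hypotheses of \cref{CRF}, hence each factor is rigid and Einstein with constant $\lambda$, and a Riemannian product of Einstein manifolds sharing the same Einstein constant is Einstein. That two-line argument is the intended one (note that it, and the corollary itself, silently require all factors to have a common $\lambda$; with distinct $\lambda_i$ the product is only a product of Einstein metrics, not Einstein).

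Your main route --- assembling the product into a single closed quasi-Einstein manifold satisfying Curvature identity II and applying \cref{CRF} once --- is more ambitious but has two genuine gaps. First, as you yourself observe, for finite $m$ the cross terms $\tfrac1m\,df_i\otimes df_j$ obstruct the quasi-Einstein equation on the product unless all but one potential is constant, so in that regime you are no longer treating the corollary in the generality in which it is stated. Second, and more seriously, the function $h$ you build is not well defined: at a point $(p,q)$ with $\mathcal{R}_1(p)\neq 0$ and $\mathcal{R}_2(q)\neq 0$, the identity $\Delta\mathcal{R}+Q(\mathcal{R})=h\mathcal{R}$ forces $h(p,q)=h_1(p)=h_2(q)$ simultaneously; since $h_1$ depends only on the first factor and $h_2$ only on the second, this forces both to be equal constants on the non-flat locus. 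That is an additional hypothesis, not something that is ``consistent on overlaps'' for free, and it is where the elaborate route breaks down. (Your block-diagonal bookkeeping for $\Delta$ and for $Q=\mathcal{R}^2+\mathcal{R}^{\#}$ is correct; the gluing of the $h_i$ is the real obstruction.) The parenthetical argument is the one to keep.
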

\begin{corollary}
    If $(M,g,f,m,\lambda)$ is a quasi-Einstein manifold, then $\Delta \mathcal{R} + Q(\mathcal{R}) = h\mathcal{R}$ cannot hold whenever $h_{min} > 2\lambda$.
\end{corollary}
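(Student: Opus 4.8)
The plan is to argue by contradiction, using the pointwise identity \cref{LF} at an extremal point of $f$, after first disposing of the non-positive $\lambda$ case. So suppose $(M,g,f,m,\lambda)$ satisfies the Curvature identity II and $h_{\min} > 2\lambda$. If $\lambda \le 0$, then $(M,g,f)$ is already rigid by the cited results (\cite{KKWP} for $m\in\mathbb{N}$, \cite{RSTV} for $m=\infty$), so $f$ is constant and $g$ is Einstein; then \cref{Eq11} gives $\Delta\mathcal{R} + Q(\mathcal{R}) = 2\lambda\mathcal{R}$, and comparison with the Curvature identity II forces $(h-2\lambda)\mathcal{R}\equiv 0$. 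Since $(M,g)$ is not flat (the flat case being the one trivial exception, where the identity holds for every $h$), $\mathcal{R}$ is nonzero at some point, where we must then have $h=2\lambda$, contradicting $h_{\min}>2\lambda$.

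It therefore remains to treat $\lambda>0$. Here the key input, already used in the proof of \cref{RC} (see \cite{RQE,RSTV}), is that the scalar curvature $s$ of a closed quasi-Einstein manifold with $\lambda>0$ is strictly positive on all of $M$. Since $M$ is closed, $f$ attains its global minimum at some point $p$; at $p$ we have $\nabla f(p)=0$, so the terms $\tfrac{4}{m}r(\nabla f,\nabla f)$ and $g(\nabla f,\nabla s)$ on the right-hand side of the identity in \cref{LF} vanish, while $Hess f(p)$ is positive semidefinite, so its eigenvalues $\mu_1,\dots,\mu_n$ are all $\ge 0$ and hence $(\Delta f)^2(p)=\big(\textstyle\sum_i\mu_i\big)^2\ge\sum_i\mu_i^2=\|Hess f\|^2(p)$ by non-negativity of the pairwise products $\mu_i\mu_j$. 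Thus \cref{LF} evaluated at $p$ reads $(h(p)-2\lambda)s(p)=\tfrac{2}{m}\big(\|Hess f\|^2-(\Delta f)^2\big)(p)\le 0$ (and for $m=\infty$ the right-hand side is simply $0$).

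Finally, since $s(p)>0$ while $h(p)-2\lambda\ge h_{\min}-2\lambda>0$, the left-hand side is strictly positive, a contradiction; hence the Curvature identity II cannot hold when $h_{\min}>2\lambda$. There is no substantive obstacle beyond this bookkeeping: the corollary is essentially the pointwise shadow of \cref{RF} and \cref{CRF}, so the only points deserving care are the semidefiniteness inequality $\|Hess f\|^2\le(\Delta f)^2$ at an extremum of $f$ and the observation that positivity of $s$ is exactly what requires $\lambda>0$, which is why the $\lambda\le 0$ reduction is carried out first. (One can also note that \cref{RF} gives $\int((h-2\lambda)s)\,dV_g>0$ here, hence non-rigidity of $(M,g,f)$, but this is consistent with $\lambda>0$ rather than contradictory, so it is the pointwise estimate at $p$ that actually closes the argument.)
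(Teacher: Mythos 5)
Your argument is correct, and it runs on the same engine as the paper's (implicit) derivation: the pointwise evaluation of the identity in \cref{LF} at a critical point of the potential where the Hessian is semidefinite, so that $\|Hess f\|^{2}\le(\Delta f)^{2}$ and hence $(h-2\lambda)s\le 0$ there, combined with strict positivity of $s$ when $\lambda>0$. The difference is organizational rather than substantive. The paper intends the corollary as an instance of \cref{CRF} with the potential function equal to $h$ itself (so a global maximum of $h$ is automatically a critical point of the potential), which yields rigidity via the integral identity of \cref{RF} and then forces $h\equiv 2\lambda$; you instead work at the global minimum of the potential and close the contradiction entirely pointwise, never invoking \cref{RF}, and you allow $h$ and the potential to be unrelated, so you actually prove a slightly stronger statement. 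Your separate treatment of $\lambda\le 0$ via the known rigidity results is also the right move, and in doing so you correctly expose a loophole in the statement as written: a flat torus is quasi-Einstein with $\lambda=0$ and satisfies $\Delta\mathcal{R}+Q(\mathcal{R})=0=h\mathcal{R}$ for \emph{every} $h$, including $h$ with $h_{\min}>2\lambda=0$, so the corollary needs $\mathcal{R}\not\equiv 0$ (equivalently $s\not\equiv 0$, or $\lambda>0$) as an explicit hypothesis; the paper's own route through \cref{CRF} and \cref{Eq11} has exactly the same gap, since rigidity alone gives no contradiction when the curvature vanishes identically. Apart from flagging that this exception should be stated as a hypothesis rather than parenthetically excluded, your proof is complete.
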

 We have a list of functions that satisfy \cref{CF}
\begin{remark}
If $(M,g,f,m,\lambda)$ is a quasi-Einstein manifold that satisfies the Curvature identity, then $(M,g,f)$ is rigid if $h$ is one of the following that includes polynomial, exponential, logarithmic, and trigonometric functions of `$f$'.
\begin{center}
\begin{table}[h!]
\begin{tabular}{|c|}
\hline
\textbf{A list of functions} \\ \hline
$h = af^{k},\,\,a,k\,\in \mathbb{R}$\\
$h = ae^{f^{k}},\,k,a\in \mathbb{R} \,\,\text{and}\,\,a \leq 2\lambda$\\
$h = a log(f^{k}),\,\,a,k\,\in \mathbb{R}$\\
$h = a sin(f^{k}) + b cos(f^{k}) ,\,k,a,b\in \mathbb{R} \,\,\text{and}\,\,|a| + |b|\leq 2\lambda$\\
 \hline
\end{tabular}
\label{tab:onecol}
\end{table}
\end{center}
\end{remark}

\bibliographystyle{amsplain}
\bibliography{references}

@article {KKWP,
    AUTHOR = {Kim, Dong-Soo and Kim, Young Ho},
     TITLE = {Compact {E}instein warped product spaces with nonpositive
              scalar curvature},
   JOURNAL = {Proc. Amer. Math. Soc.},
  FJOURNAL = {Proceedings of the American Mathematical Society},
    VOLUME = {131},
      YEAR = {2003},
    NUMBER = {8},
     PAGES = {2573--2576},
      ISSN = {0002-9939,1088-6826},
   MRCLASS = {53C25 (53C21)},
  MRNUMBER = {1974657},
MRREVIEWER = {Ezio\ de Ara\'ujo Costa},
       DOI = {10.1090/S0002-9939-03-06878-3},
       URL = {https://doi.org/10.1090/S0002-9939-03-06878-3},
}

@article {RQE,
    AUTHOR = {Case, Jeffrey and Shu, Yu-Jen and Wei, Guofang},
     TITLE = {Rigidity of quasi-{E}instein metrics},
   JOURNAL = {Differential Geom. Appl.},
  FJOURNAL = {Differential Geometry and its Applications},
    VOLUME = {29},
      YEAR = {2011},
    NUMBER = {1},
     PAGES = {93--100},
      ISSN = {0926-2245,1872-6984},
   MRCLASS = {53C24 (53C25)},
  MRNUMBER = {2784291},
MRREVIEWER = {Vincent\ Minerbe},
       DOI = {10.1016/j.difgeo.2010.11.003},
       URL = {https://doi.org/10.1016/j.difgeo.2010.11.003},
}

@book {BCRF,
    AUTHOR = {Chow, Bennett and Chu, Sun-Chin and Glickenstein, David and
              Guenther, Christine and Isenberg, James and Ivey, Tom and
              Knopf, Dan and Lu, Peng and Luo, Feng and Ni, Lei},
     TITLE = {The {R}icci flow: techniques and applications. {P}art {II}},
    SERIES = {Mathematical Surveys and Monographs},
    VOLUME = {144},
      NOTE = {Analytic aspects},
 PUBLISHER = {American Mathematical Society, Providence, RI},
      YEAR = {2008},
     PAGES = {xxvi+458},
      ISBN = {978-0-8218-4429-8},
   MRCLASS = {53C44 (35K55 53C21)},
  MRNUMBER = {2365237},
MRREVIEWER = {James\ Alexander\ McCoy},
       DOI = {10.1090/surv/144},
       URL = {https://doi.org/10.1090/surv/144},
}

@article {DYKWP,
    AUTHOR = {Kim, Dong-Soo and Kim, Young Ho},
     TITLE = {Compact {E}instein warped product spaces with nonpositive
              scalar curvature},
   JOURNAL = {Proc. Amer. Math. Soc.},
  FJOURNAL = {Proceedings of the American Mathematical Society},
    VOLUME = {131},
      YEAR = {2003},
    NUMBER = {8},
     PAGES = {2573--2576},
      ISSN = {0002-9939,1088-6826},
   MRCLASS = {53C25 (53C21)},
  MRNUMBER = {1974657},
MRREVIEWER = {Ezio\ de Ara\'ujo Costa},
       DOI = {10.1090/S0002-9939-03-06878-3},
       URL = {https://doi.org/10.1090/S0002-9939-03-06878-3},
}

@article {RSTV,
    AUTHOR = {Ivey, Thomas},
     TITLE = {Ricci solitons on compact three-manifolds},
   JOURNAL = {Differential Geom. Appl.},
  FJOURNAL = {Differential Geometry and its Applications},
    VOLUME = {3},
      YEAR = {1993},
    NUMBER = {4},
     PAGES = {301--307},
      ISSN = {0926-2245,1872-6984},
   MRCLASS = {53C21 (58G30)},
  MRNUMBER = {1249376},
MRREVIEWER = {Emmanuel\ Hebey},
       DOI = {10.1016/0926-2245(93)90008-O},
       URL = {https://doi.org/10.1016/0926-2245(93)90008-O},
}

@article {CGSPW,
    AUTHOR = {Petersen, Peter and Wylie, William},
     TITLE = {On the classification of gradient {R}icci solitons},
   JOURNAL = {Geom. Topol.},
  FJOURNAL = {Geometry \& Topology},
    VOLUME = {14},
      YEAR = {2010},
    NUMBER = {4},
     PAGES = {2277--2300},
      ISSN = {1465-3060,1364-0380},
   MRCLASS = {53C21 (53C25 53C44)},
  MRNUMBER = {2740647},
MRREVIEWER = {Manuel\ Fern\'{a}ndez-L\'{o}pez},
       DOI = {10.2140/gt.2010.14.2277},
       URL = {https://doi.org/10.2140/gt.2010.14.2277},
}

@article {RSAN,
    AUTHOR = {Naber, Aaron},
     TITLE = {Noncompact shrinking four solitons with nonnegative curvature},
   JOURNAL = {J. Reine Angew. Math.},
  FJOURNAL = {Journal f\"{u}r die Reine und Angewandte Mathematik. [Crelle's
              Journal]},
    VOLUME = {645},
      YEAR = {2010},
     PAGES = {125--153},
      ISSN = {0075-4102,1435-5345},
   MRCLASS = {53C25 (53C21)},
  MRNUMBER = {2673425},
MRREVIEWER = {Esther\ Cabezas Rivas},
       DOI = {10.1515/CRELLE.2010.062},
       URL = {https://doi.org/10.1515/CRELLE.2010.062},
}

@article {LCFQEM,
    AUTHOR = {Catino, Giovanni and Mantegazza, Carlo and Mazzieri, Lorenzo
              and Rimoldi, Michele},
     TITLE = {Locally conformally flat quasi-{E}instein manifolds},
   JOURNAL = {J. Reine Angew. Math.},
  FJOURNAL = {Journal f\"{u}r die Reine und Angewandte Mathematik. [Crelle's
              Journal]},
    VOLUME = {675},
      YEAR = {2013},
     PAGES = {181--189},
      ISSN = {0075-4102,1435-5345},
   MRCLASS = {53C25 (53C44)},
  MRNUMBER = {3021450},
MRREVIEWER = {Absos\ Ali\ Shaikh},
       DOI = {10.1515/crelle.2011.183},
       URL = {https://doi.org/10.1515/crelle.2011.183},
}

@incollection {RHS,
    AUTHOR = {Hamilton, Richard S.},
     TITLE = {The {R}icci flow on surfaces},
 BOOKTITLE = {Mathematics and general relativity ({S}anta {C}ruz, {CA},
              1986)},
    SERIES = {Contemp. Math.},
    VOLUME = {71},
     PAGES = {237--262},
 PUBLISHER = {Amer. Math. Soc., Providence, RI},
      YEAR = {1988},
      ISBN = {0-8218-5079-2},
   MRCLASS = {53C20 (35K55 58G11)},
  MRNUMBER = {954419},
MRREVIEWER = {Dennis\ M.\ DeTurck},
       DOI = {10.1090/conm/071/954419},
       URL = {https://doi.org/10.1090/conm/071/954419},
}

@article{ABSP,
     title={On rigidity of conformal submersions}, 
      author={Atreyee Bhattacharya and Sayoojya Prakash},
    journal={arXiv:2407.15493},
      year={2024},
      eprint={2407.15493},
      archivePrefix={arXiv},
      primaryClass={math.DG},
      url={https://arxiv.org/abs/2407.15493}, 
}

@book {PTLRF,
    AUTHOR = {Topping, Peter},
     TITLE = {Lectures on the {R}icci flow},
    SERIES = {London Mathematical Society Lecture Note Series},
    VOLUME = {325},
 PUBLISHER = {Cambridge University Press, Cambridge},
      YEAR = {2006},
     PAGES = {x+113},
      ISBN = {978-0-521-68947-2; 0-521-68947-3},
   MRCLASS = {53C44},
  MRNUMBER = {2265040},
MRREVIEWER = {Peng\ Lu},
       DOI = {10.1017/CBO9780511721465},
       URL = {https://doi.org/10.1017/CBO9780511721465},
}

@article{BCI,
  title={The Ricci flow: an introduction},
  author={Chow, Bennett and Knopf, Dan},
  journal={Mathematical surveys and monographs},
  volume={110},
  year={2011}
}

@article{PRF,
  title={The entropy formula for the Ricci flow and its geometric applications},
  author={Perelman, Grisha},
  journal={arXiv preprint math/0211159},
  year={2002}
}

@book {RGDC,
    AUTHOR = {do Carmo, Manfredo Perdig\~{a}o},
     TITLE = {Riemannian geometry},
    SERIES = {Mathematics: Theory \& Applications},
   EDITION = {Portuguese},
 PUBLISHER = {Birkh\"{a}user Boston, Inc., Boston, MA},
      YEAR = {1992},
     PAGES = {xiv+300},
      ISBN = {0-8176-3490-8},
   MRCLASS = {53-01},
  MRNUMBER = {1138207},
MRREVIEWER = {Bang-yen\ Chen},
       DOI = {10.1007/978-1-4757-2201-7},
       URL = {https://doi.org/10.1007/978-1-4757-2201-7},
}

@article {PWRGRS,
    AUTHOR = {Petersen, Peter and Wylie, William},
     TITLE = {Rigidity of gradient {R}icci solitons},
   JOURNAL = {Pacific J. Math.},
  FJOURNAL = {Pacific Journal of Mathematics},
    VOLUME = {241},
      YEAR = {2009},
    NUMBER = {2},
     PAGES = {329--345},
      ISSN = {0030-8730,1945-5844},
   MRCLASS = {53C24 (53C21 53C25)},
  MRNUMBER = {2507581},
MRREVIEWER = {Esther\ Cabezas Rivas},
       DOI = {10.2140/pjm.2009.241.329},
       URL = {https://doi.org/10.2140/pjm.2009.241.329},
}

@article {PWGRSS,
    AUTHOR = {Petersen, Peter and Wylie, William},
     TITLE = {On gradient {R}icci solitons with symmetry},
   JOURNAL = {Proc. Amer. Math. Soc.},
  FJOURNAL = {Proceedings of the American Mathematical Society},
    VOLUME = {137},
      YEAR = {2009},
    NUMBER = {6},
     PAGES = {2085--2092},
      ISSN = {0002-9939,1088-6826},
   MRCLASS = {53C25 (53C07 53C21)},
  MRNUMBER = {2480290},
MRREVIEWER = {McKenzie\ Y.\ Wang},
       DOI = {10.1090/S0002-9939-09-09723-8},
       URL = {https://doi.org/10.1090/S0002-9939-09-09723-8},
}

@article{TRH,
  title={Three-manifolds with positive Ricci curvature},
  author={Hamilton, Richard S},
  journal={Journal of Differential geometry},
  volume={17},
  number={2},
  pages={255--306},
  year={1982},
  publisher={Lehigh University}
}

@article{EXK,
  title={K{\"a}hler--Ricci solitons on toric manifolds with positive first Chern class},
  author={Wang, Xu-Jia and Zhu, Xiaohua},
  journal={Advances in Mathematics},
  volume={188},
  number={1},
  pages={87--103},
  year={2004},
  publisher={Elsevier}
}

@article {CPW,
    AUTHOR = {He, Chenxu and Petersen, Peter and Wylie, William},
     TITLE = {On the classification of warped product {E}instein metrics},
   JOURNAL = {Comm. Anal. Geom.},
  FJOURNAL = {Communications in Analysis and Geometry},
    VOLUME = {20},
      YEAR = {2012},
    NUMBER = {2},
     PAGES = {271--311},
      ISSN = {1019-8385,1944-9992},
   MRCLASS = {53C25},
  MRNUMBER = {2928714},
MRREVIEWER = {Yu\ Ding},
       DOI = {10.4310/CAG.2012.v20.n2.a3},
       URL = {https://doi.org/10.4310/CAG.2012.v20.n2.a3},
}

@article {CQ,
    AUTHOR = {Chen, Qiang and He, Chenxu},
     TITLE = {On {B}ach flat warped product {E}instein manifolds},
   JOURNAL = {Pacific J. Math.},
  FJOURNAL = {Pacific Journal of Mathematics},
    VOLUME = {265},
      YEAR = {2013},
    NUMBER = {2},
     PAGES = {313--326},
      ISSN = {0030-8730,1945-5844},
   MRCLASS = {53C25 (53C20 53C21)},
  MRNUMBER = {3096503},
MRREVIEWER = {Daniel\ J. F. Fox},
       DOI = {10.2140/pjm.2013.265.313},
       URL = {https://doi.org/10.2140/pjm.2013.265.313},
}

@article {RSTP,
    AUTHOR = {Figueras, Pau and Lucietti, James and Wiseman, Toby},
     TITLE = {Ricci solitons, {R}icci flow and strongly coupled {CFT} in the
              {S}chwarzschild {U}nruh or {B}oulware vacua},
   JOURNAL = {Classical Quantum Gravity},
  FJOURNAL = {Classical and Quantum Gravity},
    VOLUME = {28},
      YEAR = {2011},
    NUMBER = {21},
     PAGES = {215018, 40},
      ISSN = {0264-9381,1361-6382},
   MRCLASS = {53C25 (53C44 53C80 81T40 83C47)},
  MRNUMBER = {2851567},
MRREVIEWER = {Dmitri\ V.\ Vassilevich},
       DOI = {10.1088/0264-9381/28/21/215018},
       URL = {https://doi.org/10.1088/0264-9381/28/21/215018},
}

@article {NCRS,
    AUTHOR = {Lauret, Jorge},
     TITLE = {Ricci soliton solvmanifolds},
   JOURNAL = {J. Reine Angew. Math.},
  FJOURNAL = {Journal f\"ur die Reine und Angewandte Mathematik. [Crelle's
              Journal]},
    VOLUME = {650},
      YEAR = {2011},
     PAGES = {1--21},
      ISSN = {0075-4102,1435-5345},
   MRCLASS = {53C30 (14L24 22E25 53C25 53C44)},
  MRNUMBER = {2770554},
MRREVIEWER = {McKenzie\ Y.\ Wang},
       DOI = {10.1515/CRELLE.2011.001},
       URL = {https://doi.org/10.1515/CRELLE.2011.001},
}

@book {HRFBC,
    AUTHOR = {Chow, Bennett and Lu, Peng and Ni, Lei},
     TITLE = {Hamilton's {R}icci flow},
    SERIES = {Graduate Studies in Mathematics},
    VOLUME = {77},
 PUBLISHER = {American Mathematical Society, Providence, RI; Science Press
              Beijing, New York},
      YEAR = {2006},
     PAGES = {xxxvi+608},
      ISBN = {978-0-8218-4231-7; 0-8218-4231-5},
   MRCLASS = {53C44 (35K55 53C21 57M40 57M50)},
  MRNUMBER = {2274812},
MRREVIEWER = {James\ Alexander\ McCoy},
       DOI = {10.1090/gsm/077},
       URL = {https://doi.org/10.1090/gsm/077},
}

\end{document}